\newtheorem{theorem}{Theorem}
\newtheorem{definition}{Definition}
\newtheorem{lemma}[theorem]{Lemma}
\newtheorem{corollary}{Corollary}[theorem]
\theoremstyle{definition}
\theoremstyle{definition}
\newcounter{casecount}
\newenvironment{case}{\refstepcounter{casecount}\textbf{Case \arabic{casecount}:}}{}
\newcommand{\Z}{\mathbb{Z}}
\newcommand{\X}{\mathbf{X}}
\tikzstyle{point}=[circle,draw, fill=black, scale = 0.3]
\title{A Classification of Winning Sets of Cops in $\mathbb{Z}^n$}
\author[$\dagger$]{Kenzie Fontenot}
\author[$\star$]{Iris Nguyen}
\author[$\ddag$]{Cody Olsen}
\affil[$\dagger$]{Department of Mathematics, University of North Texas, Denton, Texas 76203}
\affil[$\star$]{Department of Mathematics, University of Texas at Austin, Austin, Texas 78712}
\affil[$\ddag$]{Department of Mathematics, University of Texas at San Antonio, San Antonio, Texas 78249}
\begin{document}

\maketitle

\begin{abstract}
The game of Cops and Robbers is a pursuit-evasion game on graphs that has been extensively studied in finite settings, particularly through the concept of cop number. In this paper, we explore infinite variants of the game, focusing on the lattice graph $\mathbb{Z}^n$. Since the cop number of $\mathbb{Z}^n$ is infinite, we shift attention to the notion of \emph{cop density}, examining how sparsely cops may be placed while still guaranteeing capture. We introduce the framework of \emph{coordinate matching} as a central strategy and prove the existence of density-zero configurations of cops that ensure eventual capture of the robber. Building on this, we provide a complete classification of winning cop sets in $\mathbb{Z}^n$, showing necessary and sufficient conditions for capture based on infinite distributions of cops across coordinate directions. We conclude with directions for further study, including variations of the game where the robber imposes spatial restrictions on cop placement.
\end{abstract}

\newpage

\section{Introduction}

The game of Cops and Robbers was established in 1978 by French mathematician Alain Quilliot in his PhD dissertation \cite{Quillot78}. Cops and Robbers is a turn based pursuit-evasion game played by two players on a graph $G$: one player representing a singular robber and the other controlling a set of cops. The goal of the player representing the robber is to evade capture by the cops indefinitely while the role of the player controlling the cops is to, after some finite number of moves, capture the robber.

First, we will introduce a few important terms of basic graph theory. A \textit{graph} $G$ consists of a set of \textit{vertices} $V(G)$ and a set of pairs of vertices $E(G)$, called \textit{edges}. If a pair of vertices $(u,v)\in E(G)$, then we say the vertices $u$ and $v$ are \textit{adjacent} and will sometimes refer to $u$ and $v$ as neighbors.

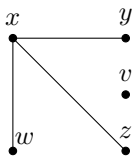
\begin{figure}[H]
\begin{center}

    \begin{tikzpicture}[scale=1.5]
        \node at (0,0) [point, label = $x$] {};

        \draw[color = black] (0,0) -- (1,0);
        \node at (1,0) [point, label = $y$, color = black] {};
        
        \draw[color = black] (0,0) -- (1,-1);
        \node at (1,-1) [point, label = $z$, color = black] {};

        \draw[color = black] (0,0) -- (0,-1);
        \node at (0,-1) [point, label={[xshift=4.5 pt, yshift= -2.5pt]$w$}, color = black] {};

        \node at (1,-0.5) [point, label = $v$] {};
    \end{tikzpicture}

    \label{Neighbors}

    \caption{The neighbors of $x$ are $w,y$ and $z$, since they connect to $x$ via an edge. $v$ is not a neighbor to $x$ since there is no edge connecting them.}
 \end{center}   
\end{figure}

To begin the game of Cops and Robbers on a graph $G$, the player controlling the cops will place each cop on a vertex of $G$, with multiple cops being allowed to occupy the same vertex. Next, the player controlling the robber selects a vertex of $G$ and places himself there. After this initial step, the players controlling the cops and robbers alternate taking turns, with the robber moving first after both players have picked their starting vertices. We assume any vertex is adjacent to itself, which allows a player to remain on the same vertex on two consecutive turns. For the sake of brevity, we will say the cops take some action instead of saying the player controlling the cops takes that action, and use a similar convention for the robber. During the cops' turn, each cop may move to any vertex which is adjacent to the one they are occupying, and the robber can do the same on his turn. The cops win if, at the end of their turn, one of the cops occupies the same vertex as the robber. The robber wins if he is able to evade capture indefinitely \cite{Bonato11}.

The \textit{cop number} of a graph $G$, denoted $C(G)$, is defined as the minimum number of cops required to successfully capture the robber on $G$ \cite{Aigner84}. If such a number does not exist, then we say $C(G)=\infty$. If $G$ is a finite graph with $n$ vertices, then $C(G) \leq n$, since the cops could occupy all the vertices of $G$ and immediately catch the robber when he places himself. Most of the study around cops and robbers is devoted to finding cop numbers of graphs.

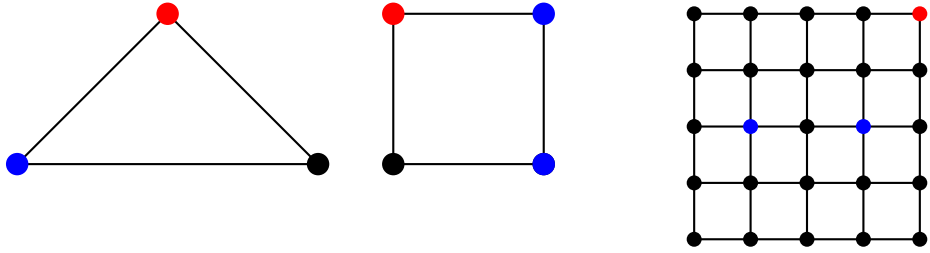
\begin{figure}
\begin{tikzpicture}

    \draw[thick] (-1,0) -- (-3,-2);
    \draw[thick] (-3,-2) -- (1,-2);
    \draw[thick] (1,-2) -- (-1,0);
    
    \node[circle,fill=red, inner sep=3pt,label=] at (-1,0){};
    
    \node[circle,fill=blue, inner sep=3pt,label=] at (-3,-2){};

    \node[circle,fill=black, inner sep=3pt,label=] at (1,-2){};

    \draw[thick] (2,0) -- (4,0);
    \draw[thick] (4,0) -- (4,-2);
    \draw[thick] (4,-2) -- (2,-2);
    \draw[thick] (2,-2) -- (2,0);
    
    \node[circle,fill=black, inner sep=3pt,label=] at (2,-2){};
    \node[circle,fill=black, inner sep=3pt,label=] at (4,-2){};
    
    \node[circle,fill=red, inner sep=3pt,label=] at (2,0){};
    
    \node[circle,fill=blue, inner sep=3pt,label=] at (4,0){};
    \node[circle,fill=blue, inner sep=3pt,label=] at (4,-2){};

    \draw[thick] (6,0) -- (9,0);
    \draw[thick] (6,-.75) -- (9,-.75);
    \draw[thick] (6,-1.5) -- (9,-1.5);
    \draw[thick] (6,-2.25) -- (9,-2.25);
    \draw[thick] (6,-3) -- (9,-3);
    \draw[thick] (6,0) -- (6,-3);
    \draw[thick] (6.75,0) -- (6.75,-3);
    \draw[thick] (7.5,0) -- (7.5,-3);
    \draw[thick] (8.25,0) -- (8.25,-3);
    \draw[thick] (9,0) -- (9,-3);
    
    \node[circle,fill=red, inner sep=2pt,label=] at (9,0){};
    
    \node[circle,fill=blue, inner sep=2pt,label=] at (6.75,-1.5){};
    \node[circle,fill=blue, inner sep=2pt,label=] at (8.25,-1.5){};
    
    \node[circle,fill=black, inner sep=2pt,label=] at (6,0){};
    \node[circle,fill=black, inner sep=2pt,label=] at (6,-.75){};
    \node[circle,fill=black, inner sep=2pt,label=] at (6,-1.5){};
    \node[circle,fill=black, inner sep=2pt,label=] at (6,-2.25){};
    \node[circle,fill=black, inner sep=2pt,label=] at (6,-3){};
    \node[circle,fill=black, inner sep=2pt,label=] at (6.75,0){};
    \node[circle,fill=black, inner sep=2pt,label=] at (6.75,-.75){};
    \node[circle,fill=black, inner sep=2pt,label=] at (6.75,-2.25){};
    \node[circle,fill=black, inner sep=2pt,label=] at (6.75,-3){};
    \node[circle,fill=black, inner sep=2pt,label=] at (7.5,0){};
    \node[circle,fill=black, inner sep=2pt,label=] at (7.5,-.75){};
    \node[circle,fill=black, inner sep=2pt,label=] at (7.5,-1.5){};
    \node[circle,fill=black, inner sep=2pt,label=] at (7.5,-2.25){};
    \node[circle,fill=black, inner sep=2pt,label=] at (7.5,-3){};
    \node[circle,fill=black, inner sep=2pt,label=] at (8.25,0){};
    \node[circle,fill=black, inner sep=2pt,label=] at (8.25,-.75){};
    \node[circle,fill=black, inner sep=2pt,label=] at (8.25,-2.25){};
    \node[circle,fill=black, inner sep=2pt,label=] at (8.25,-3){};
    \node[circle,fill=black, inner sep=2pt,label=] at (9,-1.5){};
    \node[circle,fill=black, inner sep=2pt,label=] at (9,-.75){};
    \node[circle,fill=black, inner sep=2pt,label=] at (9,-2.25){};
    \node[circle,fill=black, inner sep=2pt,label=] at (9,-3){};

\end{tikzpicture}
\caption{Illustration of winning configurations of cops on $C_3$ (A triangle), $C_4$ (a square) and $P_5 \times P_5$ (a five-by-five grid).}
\label{Examples}
\end{figure}

Figure \ref{Examples} shows three examples of graphs that are all finite and thus all have a finite cop number, namely $C(C_3)=1$, $C(C_4)=2$, and $C(P_5\times P_5)=2$. Infinite graphs, or graphs where $|V(G)|=\infty$, are studied less frequently, but are nevertheless fascinating. While every finite graph has a finite cop number, infinite graphs can have either finite or infinite cop numbers. For example: any infinite graph with a vertex that neighbors all other vertices will have cop number one. For the remainder of this paper, our attention will be on the graphs $\mathbb{Z}^n$, where $V(\mathbb{Z}^n)=\{(x_1,x_2,\dots,x_n):x_i\in \mathbb{Z} \text{ for each }1\leq i \leq n\}$ and $E(\mathbb{Z}^n)=\{{(u,v)\in \mathbb{Z}^n\times \mathbb{Z}^n: ||u-v||_1=1\}}$.

Notice that on $G = {Z}^n$, it is clear to see that $C(G)=\infty$. Since the cops' placement is decided first, if there are only finitely many cops the robber could simply place himself away from all of the cops. 


Since the cop number of $\Z^n$ must be infinity, the interesting question becomes what is the "smallest" set of cops that can still catch the robber? To explore this question, we turn to the concept of \textit{density}. Let us recall the standard definition of density in $\mathbb{Z}^n$: 

\begin{definition}
     If $A$ is a subset of $\Z^n$, the density of A, denoted \( d_A \), is defined as $\displaystyle{\lim_{m\to \infty}\frac{|A \cap [-m,m]^n | }{|[-m,m]^n|}}$. The expression after the limit represents the ratio of how many elements of A occur in a given area.
\end{definition}

If we placed a cop on every vertex of $\Z^n$, this would be a set with the maximum density of 1. Figure \ref{density 1/2} gives a visual representation of how to compute the density of a set of cops, $\mathcal{C}$, that occupy the entire upper half plane in $\mathbb{Z}\times \mathbb{Z}$. The dashed lines demonstrate the squares $[-m,m]^2$ in $\mathbb{Z}\times \mathbb{Z}$ while the filled in vertices represent the starting placement of the cops. For the sake of visual simplicity, we will omit from drawing any vertices in $\mathbb{Z}\times\mathbb{Z}$ that are not currently occupied by either a cop or a robber for the remainder of this paper. It is clear that $\displaystyle{\lim_{m\to \infty}\frac{|\mathcal{C} \cap [-m,m]^2 | }{|[-m,m]^2|}}=\displaystyle{\lim_{m\to \infty}\frac{(\frac{m+1}{2})m }{m^2}}=\displaystyle{\lim_{m\to\infty} \frac{m^2+m}{2m^2}}=\displaystyle{\frac{1}{2}}$. Given a collection of cops' $\mathcal{C}$ with the set of starting positions $\mathcal{A}$, define the \textit{cop density} of $\mathcal{C}$ to be $d_\mathcal{A}$. For simplicity, when discussing the density of a collection of cops' starting positions, we will simply say $\mathcal{C}=\mathcal{A}$ and note that the density of a set of cops will refer to the density of the collection of vertices that the cops initially occupy. For the remainder of this paper, we will assume $G=\mathbb{Z}^n$ unless otherwise specified and we will drop mentions of the graph $G$, as it is understood to be $\mathbb{Z}^n$.

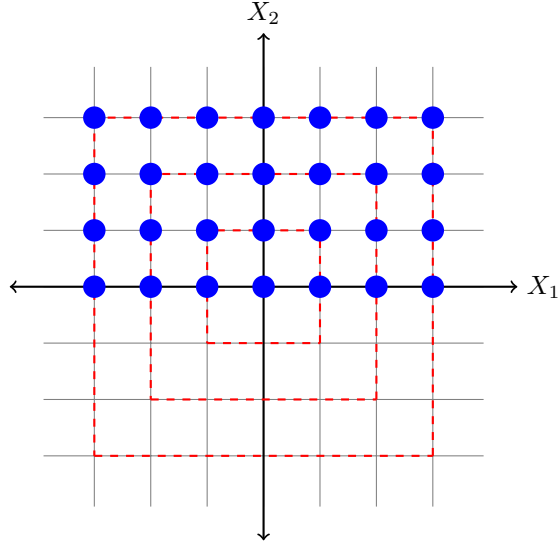
\begin{figure}
\begin{center}
\begin{tikzpicture}[scale=0.75]
\label{density}
\draw[step=1cm,gray,thin] (-3.9,-3.9) grid (3.9,3.9);

\draw[thick,->] (0,0) -- (4.5,0);
\draw[thick,->] (0,0) -- (0,4.5);
\draw[thick,<-] (-4.5,0) -- (0,0);
\draw[thick,<-] (0,-4.5) -- (0,0);

\draw[red,thick,dashed] (-1,-1) -- (1,-1);
\draw[red,thick,dashed] (1,-1) -- (1,1);
\draw[red,thick,dashed] (1,1) -- (-1,1);
\draw[red,thick,dashed] (-1,1) -- (-1,-1);

\draw[red,thick,dashed] (2,2) -- (-2,2);
\draw[red,thick,dashed] (-2,2) -- (-2,-2);
\draw[red,thick,dashed] (-2,-2) -- (2,-2);
\draw[red,thick,dashed] (2,-2) -- (2,2);

\draw[red,thick,dashed] (-3,-3) -- (3,-3);
\draw[red,thick,dashed] (3,-3) -- (3,3);
\draw[red,thick,dashed] (3,3) -- (-3,3);
\draw[red,thick,dashed] (-3,3) -- (-3,-3);

\node[circle,fill=blue, inner sep=3pt,label=] at (0,1){};
\node[circle,fill=blue, inner sep=3pt,label=] at (0,2){};
\node[circle,fill=blue, inner sep=3pt,label=] at (0,3){};
\node[circle,fill=blue, inner sep=3pt,label=] at (0,0){};
\node[circle,fill=blue, inner sep=3pt,label=] at (1,0){};
\node[circle,fill=blue, inner sep=3pt,label=] at (2,0){};
\node[circle,fill=blue, inner sep=3pt,label=] at (3,0){};
\node[circle,fill=blue, inner sep=3pt,label=] at (-1,0){};
\node[circle,fill=blue, inner sep=3pt,label=] at (-2,0){};
\node[circle,fill=blue, inner sep=3pt,label=] at (-3,0){};
\node[circle,fill=blue, inner sep=3pt,label=] at (1,1){};
\node[circle,fill=blue, inner sep=3pt,label=] at (1,2){};
\node[circle,fill=blue, inner sep=3pt,label=] at (1,3){};
\node[circle,fill=blue, inner sep=3pt,label=] at (2,1){};
\node[circle,fill=blue, inner sep=3pt,label=] at (2,2){};
\node[circle,fill=blue, inner sep=3pt,label=] at (2,3){};
\node[circle,fill=blue, inner sep=3pt,label=] at (3,1){};
\node[circle,fill=blue, inner sep=3pt,label=] at (3,2){};
\node[circle,fill=blue, inner sep=3pt,label=] at (3,3){};
\node[circle,fill=blue, inner sep=3pt,label=] at (-1,1){};
\node[circle,fill=blue, inner sep=3pt,label=] at (-1,2){};
\node[circle,fill=blue, inner sep=3pt,label=] at (-1,3){};
\node[circle,fill=blue, inner sep=3pt,label=] at (-2,1){};
\node[circle,fill=blue, inner sep=3pt,label=] at (-2,2){};
\node[circle,fill=blue, inner sep=3pt,label=] at (-2,3){};
\node[circle,fill=blue, inner sep=3pt,label=] at (-3,1){};
\node[circle,fill=blue, inner sep=3pt,label=] at (-3,2){};
\node[circle,fill=blue, inner sep=3pt,label=] at (-3,3){};

\draw(4.5,0) node[anchor=west] {$X_{1}$};
\draw(0,4.5) node[anchor=south] {$X_{2}$};

\end{tikzpicture}
\caption{The above set of cops occupying the upper half plane has density ${\frac{1}{2}}$.}
\label{density 1/2}
\end{center}
\end{figure}

Another interesting factor that comes along with the discussion of cop density is the initial placement of the cops on $\mathbb{Z}^n$. In particular, the initial placement of the cops on the graph matters. This is true even if only a specific density of cops is allowed to be placed on the graph. For example, consider $\Z^2$ and the specified density $\frac{1}{2}$. We saw in Figure \ref{density 1/2} that the collection of cops, $\mathcal{C}_1$, occupying the upper half plane has density $\frac{1}{2}$. Similarly the collection of cops $\mathcal{C}_2=\{(x,y):x\in \mathbb{Z}, y\in 2\mathbb{Z}\}$ has density $\frac{1}{2}$ in $\mathbb{Z}^2$ as well. Notice that $\mathcal{C}_1$ is unable to capture the robber, as the robber can pick his starting position to be $(0,-2)$ and move to $(0,-2-k)$ on each turn $k$. However $\mathcal{C}_2$ is clearly able to catch the robber, regardless of his starting position. Thus we see that even given a specified positive density of cops, the initial configuration of cops matters.

\section{Coordinate Matching}

\begin{definition}
    On $\Z^n$ with axes $X_1,\dots,X_n$, let the robber's position at a given time be denoted $(x_1,x_2,\dots,x_n)$ and a cop's position be denoted $(y_1,y_2,\dots,y_n)$. We say a cop has \text{coordinate matched} the robber if, at the end of the cop's turn, $x_k= y_k$ for all $k\neq i$ for some $i\in \{1,\dots,n\}$.

\end{definition}

The notion of coordinate matching will be integral to all of our strategies discussed in this paper for catching the robber. The idea being that once a cop has coordinate matched a robber, that cop's job will be to prevent the robber from escaping by traveling in either the positive or negative $X_i$ direction, depending on which side of the robber the cop is on with respect to $X_i$. Our goal will be to coordinate match the robber on each side of each axis, guaranteeing that the robber has no direction to escape in. Let $c_i^\pm$ denote the $i$th coordinate of $C_i^\pm$ and suppose the robber's position is $(x_1,x_2\,\dots,x_n)$. We say the robber has been \textit{matched along $X_i$} if there exist two cops, $C_i^+$ and $C_i^-$ such that both $C_i^+$ and $C_i^-$ have both coordinate matched with $c_{j}^\pm=x_j$ for all $j\neq i$ and $c_i^-\leq x_i\leq c_i^+$.

\begin{theorem}
    On $\Z^n$, there is a density zero set of cops which can catch a robber.

\end{theorem}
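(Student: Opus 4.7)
My plan is to construct an explicit density-zero collection of cops and to give a reactive coordinate-matching strategy that guarantees capture. For the construction, pick any set $S \subseteq \Z$ that is unbounded above and below and has density zero in $\Z$ (for instance, $S = \{0\} \cup \{\pm 2^k : k \geq 0\}$ contributes only $O(\log m)$ points to $[-m,m]$). For each axis $X_i$, let $H_i = \{x \in \Z^n : x_i \in S\}$ be the union of all axis-aligned hyperplanes of the form $x_i = s$ with $s \in S$, and place a cop at every point of $\mathcal{C} = \bigcup_{i=1}^n H_i$. Then $|H_i \cap [-m,m]^n| = |S \cap [-m,m]| \cdot (2m+1)^{n-1}$, so each $H_i$ has density zero, and the finite union $\mathcal{C}$ has density zero as well.

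After the robber places himself at some $(a_1,\dots,a_n)$, I would designate $2n$ cops to play the roles $C_j^+$ and $C_j^-$, one sandwiching pair per axis. Because $S$ is unbounded in both directions, I can pick $s_j^+ \in S$ with $s_j^+ > a_j$ and $s_j^- \in S$ with $s_j^- < a_j$, and then the cop located at $(a_1,\dots,a_{j-1}, s_j^{\pm}, a_{j+1}, \dots, a_n)$ lies in $H_j \subseteq \mathcal{C}$, coordinate matches the robber, and together with its partner sandwiches him along the $X_j$-axis. So every one of the $2n$ designated cops begins matched along its assigned axis and side, with all remaining (non-designated) cops of $\mathcal{C}$ free to stay put.

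The cop strategy is then purely reactive on each axis: when the robber moves in direction $\pm k$ with $k \neq j$, both $C_j^+$ and $C_j^-$ copy that move, preserving the $n-1$ matched coordinates; when the robber moves in direction $+j$, the cop $C_j^+$ stays and $C_j^-$ takes a step $+j$; when the robber moves in direction $-j$, the cop $C_j^-$ stays and $C_j^+$ takes a step $-j$; and if the robber is stationary, both $C_j^+$ and $C_j^-$ take one step inward along $X_j$. I would define the progress measure $G_j = c_j^+ - c_j^-$; a case check shows matching and sandwiching are preserved every turn, that $G_j$ shrinks by $1$ whenever the robber moves along $X_j$ and by $2$ whenever he stays, while no $G_{j'}$ ever increases. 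Hence $\sum_{j=1}^n G_j$ is a nonnegative integer that strictly decreases each turn, so some $G_j$ must hit zero in finite time, which forces $c_j^- = c_j^+ = x_j$ and yields a capture.

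The main obstacle I expect is the bookkeeping needed to verify the reactive strategy is genuinely consistent: each designated cop has only one step per turn, yet must simultaneously keep its $n-1$ matched coordinates aligned with the robber's and contribute to shrinking the sandwich. The finicky moments are when a cop becomes adjacent to the robber during the closing phase (those cases must be cashed in as an immediate capture rather than stepped through) and when confirming that the sandwich $c_j^- \leq x_j \leq c_j^+$ is never accidentally broken by the cop's own motion. Once that case analysis is carried out, the density calculation together with the monovariant on $\sum G_j$ completes the proof.
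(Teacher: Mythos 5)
Your proposal is correct, but it reaches the conclusion by a genuinely different route than the paper. The paper places only countably many cops, all on the coordinate axes at positions $\pm 2^m$ along each $X_i$; since these cops are not coordinate-matched with a generic robber at the start, the paper's strategy needs a nontrivial matching phase, governed by the invariants $c_{i,i}^{+}(k) - x_i(k) > \sum_{j\neq i}|x_j(k)-c^{+}_{i,j}(k)|$ (and its mirror), which guarantee each cop retains enough of a lead along its assigned axis while it spends turns correcting the other $n-1$ coordinates. You instead take a much richer--but still density-zero--cop set, the union of hyperplanes $\{x_i \in S\}$ for a density-zero $S\subset \mathbb{Z}$ unbounded in both directions; this makes the matching phase free, since for every robber placement the points $(a_1,\dots,s_j^{\pm},\dots,a_n)$ already carry cops that coordinate-match and sandwich him along every axis. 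Your capture argument then reduces to the simple monovariant $\sum_j (c_j^+ - c_j^-)$, which is cleaner than the paper's potential $D^{\pm}_{i,k}$ and its three-part induction hypothesis; the case analysis you defer (preserving the sandwich, cashing in adjacency as immediate capture) is routine and does go through. What each approach buys: yours trades a larger (though still density-zero) cop set for a substantially simpler strategy and proof; the paper's buys a far sparser winning set--countable, supported on the axes--and, not incidentally, develops the two-phase coordinate-matching machinery that it reuses in Lemma \ref{Origin catch in n} and the classification theorem. Both establish the stated theorem.
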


\begin{proof}

    For each $i\in \{1,\dots,n\}$, let $C_{i,m}^+$ and $C_{i,m}^-$ be cops whose starting positions are $(c_{i,1}^+,c_{i,2}^+,\dots,c_{i,n}^+)$ and $(c_{i,1}^-,c_{i,2}^-,\dots,c_{i,n}^-)$, where $c_{i,k}^\pm=0$ for all $k\neq i$ and $c_{i,i}^+=2^m=-c_{i,i}^-$ for $m\geq 1$. Define the set of cops to be $\mathcal{C}=\{C_{i,m}^+,C_{i,m}^-:1\leq i\leq n, m\in \mathbb{N}\}$. With this collection of cops $\mathcal{C}$, the intuition is that given any starting position the robber chooses, there are cops located on each axis on either side of the robber who can coordinate match the robber along their respective axis.

\begin{figure}
\begin{center}
\begin{tikzpicture}[scale=0.4]

\draw[step=1cm,gray,thin] (-8.9,-8.9) grid (8.9,8.9);

\draw[thick,->] (0,0) -- (8.9,0);
\draw[thick,->] (0,0) -- (0,8.9);
\draw[thick,<-] (-8.9,0) -- (0,0);
\draw[thick,<-] (0,-8.9) -- (0,0);

\node[circle,fill=blue, inner sep=3pt,label=$C_{1,1}^+$] at (2,0){};
\node[circle,fill=blue, inner sep=3pt,label=$C_{1,1}^-$] at (-2,0){};
\node[circle,fill=blue, inner sep=3pt,label={[anchor=east]$C_{2,1}^+$}] at (0,2){};
\node[circle,fill=blue, inner sep=3pt,label={[anchor=east]$C_{2,1}^-$}] at (0,-2){};

\node[circle,fill=blue, inner sep=3pt,label=$C_{1,2}^+$] at (4,0){};
\node[circle,fill=blue, inner sep=3pt,label=$C_{1,2}^-$] at (-4,0){};
\node[circle,fill=blue, inner sep=3pt,label={[anchor=east]$C_{2,2}^+$}] at (0,4){};
\node[circle,fill=blue, inner sep=3pt,label={[anchor=east]$C_{2,2}^-$}] at (0,-4){};

\node[circle,fill=blue, inner sep=3pt,label=$C_{1,3}^+$] at (8,0){};
\node[circle,fill=blue, inner sep=3pt,label=$C_{1,3}^-$] at (-8,0){};
\node[circle,fill=blue, inner sep=3pt,label={[anchor=east]$C_{2,3}^+$}] at (0,8){};
\node[circle,fill=blue, inner sep=3pt,label={[anchor=east]$C_{2,3}^-$}] at (0,-8){};

\draw(8.9,0) node[anchor=west] {$X_{1}$};
\draw(0,8.9) node[anchor=south] {$X_{2}$};

\end{tikzpicture}
\caption{Starting configuration of $\mathcal{C}$ in $\mathbb{Z}^2$.}
\end{center}
\end{figure}
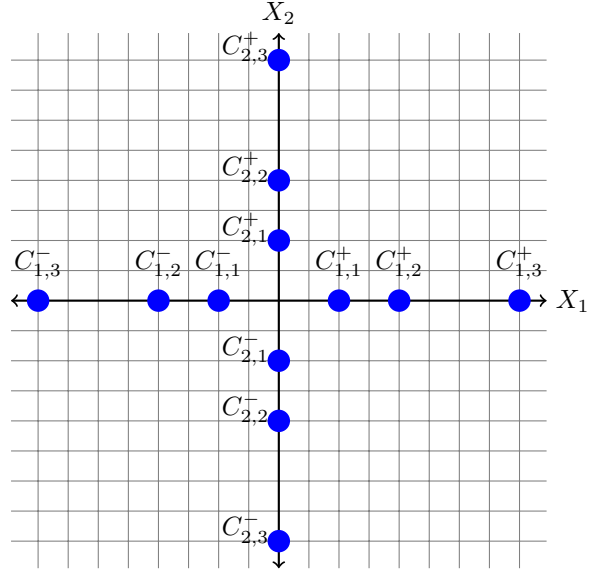
\label{Starting configuration}

   To see this, assume that the robber's starting position is \( (x_1, x_2, ... ,x_n) \). Let $N$ be the least integer such that $2^N>\displaystyle{\sum_{j=1}^{n} |x_j|}$. We claim the collection of cops $\mathcal{C}'=\{C_{i,N}^\pm:1\leq i\leq n\}\subset\mathcal{C}$ can catch the robber. The idea is that $C^{+}_{i,N}$ would be able to coordinate match with respect to $X_i$ if the robber ran directly in the positive $i$ direction and similarly $C_{i,N}^-$ would be able to coordinate match the robber with respect to $X_i$ if he ran directly in the negative $i$ direction.

     Our strategy for $\mathcal{C}$ will first be that for every cop $C\in \mathcal{C}\setminus \mathcal{C}'$, $C$ will remain on his starting vertex on each turn. Thus the only cops that will move during this run of the game will be the cops in $\mathcal{C}'$. For ease of notation since $N$ is now fixed, we will denote $C_{i,N}^+$ as $C_{i}^+$ and $C_{i,N}^-$ as $C_{i}^-$. Let $C_{i}^{+}$'s coordinates at the end of turn $k$ be given by $(c_{i,1}^{+}(k), c_{i,2}^{+}(k),\dots, c_{i,n}^{+}(k))$ and similarly for $C_{i}^{-}$. Define $D_{i,k}^{+} = \sum_{j=1}^n |c_{i,j}^{+}(k) - x_j(k)|$ and define $D_{i,k}^{-}$ similarly. Note that if $D_{i,k}^{\pm}$ is 0, then $C_i^\pm$ has caught the robber. 
      On turn $k$, we assume the following induction assumption holds for each cop $C_{i}^{\pm}$:


       \begin{enumerate}
        \item $\displaystyle c_{i,i}^{+}(k) - x_i(k) > \sum_{j\neq i}|x_j(k)-c^{+}_{i,j}(k)|$

       \item  $x_i(k) -\displaystyle c^{-}_{i,i}(k) > \sum_{j\neq i}|x_{j}(k)-c^{-}_{i,j}(k)|$

        \item $D_{i,k}^{\pm} \leq D_{i,k-1}^{\pm}$, and for at least one cop, this inequality is strict.
    \end{enumerate}




       We note that (1) and (2) imply that $c_{i,i}^{-}(k) < x_i(k) < c_{i,i}^{+}(k)$, meaning the robber will be between the two cops when they do coordinate-match him along $X_i$. If we can define a strategy which satisfies these conditions, then eventually the cops will be in a position where they have coordinate matched the robber along each axis. From the way we picked the cops' coordinates, conditions (1) and (2) are satisfied, and (3) is vacuously true.


     At the beginning of turn $k+1$, suppose the robber starts on $ (x_1(k), x_2(k), ... , x_n(k)) $ and moves parallel to $\mathbf{X}_i$. Without loss of generality, assume he moves from $(x_1(k), x_2(k), \dots, x_n(k))$ to $(x_1(k)+1,x_2(k),\dots, x_n(k))$. In this scenario, for $j\neq 1$, $C_j^+$ and $C_j^-$ should both increase their first coordinate by 1. Thus, for any cops other than $C_1^{+}$ and $C_1^{-}$, $|x_j(k+1) - c_{j,j}^{+}(k+1)| = |x_j(k) - c_{j,j}^{+}(k)|$ for all $j\neq 1$. This means (1) and (2) are satisfied for these cops.
    
    

    Inductively, $\displaystyle c^{+}_{i,i}(k) - x_i(k) > \sum_{j\neq i}|x_j(k)-c^{+}_{i,j}(k)|$. If the sum on the right is zero, then $C_i^{+}$ has coordinate-matched the robber and should move closer to him on the first coordinate. Otherwise, have $C_1^{+}$ move closer to the robber on axis $\X_j$, where $j\neq 1$ is least such that $C_1^+$'s $j$th coordinate differs from the robber's. In this case, $c_{1,1}^{+}(k+1) - a_1(k+1) =  c^{+}_{1,1}(k) - a_1(k) -1$, $a_j(k+1) - c_{1,j}^{+}(k+1) = x_j(k) - c_{1,j}^{+}(k)-1$, and $x_l(k+1) - c_{1,l}^{+}(k+1) = x_l(k) - c_{1,l}^{+}(k)$ for all $l\neq 1,j$. Thus, property (1) holds at step $k$.


      Repeat this argument for $C_1^{-}$. The only difference here is that $c_{1,1}^{-}(k+1) - x_1(k+1) =  c^{-}_{1,1}(k) - x_1(k) + 1$, while $x_j(k+1) - c_{1,j}^{-}(k+1) = x_j(k) - c_{1,j}^{-}(k)-1$, meaning the inequality in property (2) has grown. Using this strategy, the inequality in (3) is true for each cop, and is a strict inequality for $C_1^{+}$. This strategy must eventually terminate, since $D_{i,k}^{\pm}$ will be 0 on some turn $k$.  
    

\end{proof}

In $\mathbb{Z}^n$, with axes $X_1,X_2,...,X_n$, we define for each $m\in \{1,...,n\}$ the sets $V_{m,l}^+=\partial\{ (x_1,x_2,...,x_n)\subset \mathbb{R}^n: x_m\geq|x_i|+l,i\neq m\}\cap \mathbb{Z}^n$ and $V_{m,l}^-=\partial \{  (x_1,x_2,...,x_n)\subset \mathbb{R}^n: x_m\leq-|x_i|-l,i\neq m\}\cap \mathbb{Z}^n$, where $\partial$ denotes the boundary of a set. In other words, each of the sets describes the boundary of an $n$-dimensional pyramid centered around the positive or negative side of the $X_m$ axis shifted along $X_m$ $l$ units. 

\begin{lemma}\label{Origin catch in n}
    Suppose the robber's starting position in $\mathbb{Z}^n$ is $(0,0,\dots,0)$. If for each $1\leq i \leq n$ there exist $k(i)^+,k(i)^-\in \mathbb{N}$ such that there is at least one cop in each of $V_{i,k(i)^+}^+$ and $V_{i,k(i)^-}^-$, say $C_i^+$ and $C_i^-$ where $C_i^\pm\neq C_j^\pm$ for $i\neq j$, then the collection of cops can catch the robber. 
\end{lemma}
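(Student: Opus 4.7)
The plan is to adapt the strategy and invariants used in the proof of Theorem 1 to the more general setting where each $C_i^\pm$ is allowed to sit anywhere on the pyramid boundary $V_{i,k(i)^\pm}^\pm$, not just on the $X_i$-axis. The key observation is that $C_i^+ \in V_{i,k(i)^+}^+$ means $c_{i,i}^+ \geq |c_{i,j}^+| + k(i)^+$ for every $j \neq i$, so with the robber starting at the origin we obtain
$$c_{i,i}^+ - x_i \;\geq\; \max_{j \neq i}\,|c_{i,j}^+ - x_j| \;+\; k(i)^+,$$
and symmetrically for $C_i^-$. This is a weakened (max-based rather than sum-based) analog of invariant (1) from the proof of Theorem 1, but it still starts with slack at least $1$.

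I would then use essentially the same strategy as in Theorem 1: whenever $C_i^+$ is not coordinate-matched with the robber along $X_i$, it moves one step in the $X_j$-direction for some $j \neq i$ achieving the current maximum of $|c_{i,j}^+ - x_j|$, thereby reducing that mismatch; once it is coordinate-matched, it moves one step in the $-X_i$ direction toward the robber, and $C_i^-$ plays the symmetric strategy. A case analysis on the direction of the robber's move, paralleling the one carried out in Theorem 1, then shows that the max-based invariant is preserved turn by turn, with the pyramid slack $k(i)^\pm$ absorbing the worst adversarial move in the $X_i$-direction.

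For termination, I would introduce a potential function such as $\Phi(t) = \sum_{i=1}^n \bigl(c_{i,i}^+(t) - c_{i,i}^-(t)\bigr)$, which measures the total axial span of the bounding cops and depends only on cop positions. Each time any cop is coordinate-matched and executes an approach step, $\Phi$ decreases by $1$; a bookkeeping argument then shows $\Phi$ must keep dropping, squeezing the robber to a single point until capture. The main obstacle will be bounding the time the cops can be stalled in the coordinate-matching phase, i.e.\ showing that no robber strategy can indefinitely postpone approach moves by all $2n$ cops at once. The crucial input here is that for any direction $\pm X_k$ the robber moves in, the cop $C_k^\pm$ on that side is well-positioned to respond, so each robber move yields measurable matching progress for at least one cop, forcing $\Phi$ downward on average and eventually producing a capture.
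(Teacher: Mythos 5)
Your overall plan mirrors the paper's proof of this lemma: the same mirror/fix-mismatch/approach strategy, a per-cop invariant, and a monotone potential for termination. The step that fails is your claim that ``the max-based invariant is preserved turn by turn, with the pyramid slack $k(i)^\pm$ absorbing the worst adversarial move.'' It is not preserved, and a fixed slack cannot absorb the deficit. Membership in $V_{i,k(i)^+}^+$ only gives $c_{i,i}^+ \geq \max_{j\neq i}|c_{i,j}^+| + k(i)^+$, but the number of moves $C_i^+$ must spend to coordinate-match is $\sum_{j\neq i}|c_{i,j}^+ - x_j|$, and while it spends them a robber running in the $+X_i$ direction erodes the axial gap by one per turn. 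What you actually need to survive that line of play is the sum-based condition $c_{i,i}^+ - x_i > \sum_{j\neq i}|c_{i,j}^+ - x_j|$ (invariant (1) of Theorem 1), and for $n\geq 3$ the max-based bound does not imply it: take $n=3$, the robber at the origin, and $C_1^+=(5,4,4)\in V_{1,1}^+$. Your invariant starts as $5\geq 4+1$, but after one round (robber to $(1,0,0)$, cop to $(5,3,4)$) it reads $4\geq 4$, and after three rounds the axial gap is $2$ while the maximum remaining mismatch is $3$; the robber passes the cop and escapes along $X_1$. No cleverer strategy repairs this: $\|(5,4,4)-(t,0,0)\|_1 = |5-t|+8 > t$ for every $t\geq 0$, so this cop can never occupy any vertex of the robber's ray by the time the robber reaches it.

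To be fair, the paper's own proof has the same unacknowledged issue: its displayed computation of $D_{1,k+1}^+$ drops the absolute value on $c_{1,1}^+(k)-(x_1(k)+1)$, which presupposes that $C_1^+$ stays ahead of the robber in the first coordinate, and that is exactly the sum-based domination that pyramid membership fails to supply when $n\geq 3$. So you have reproduced the difficulty rather than introduced it, and for $n=2$ (where the max over the single remaining coordinate equals the sum) both your argument and the paper's go through. But as written your proposal does not establish the lemma for $n\geq 3$, and the example above shows the obstruction is not merely expository: the six distinct cops $(5,4,4)$, $(4,5,4)$, $(4,4,5)$ and their negatives satisfy the hypothesis with $k(i)^\pm=1$, yet none of them can ever reach the ray $\{(t,0,0):t\geq 0\}$ in time. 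A correct treatment has to work with the $\ell_1$-cones $c_{i,i}^+\geq \sum_{j\neq i}|c_{i,j}^+|+k$ (or otherwise constrain where on the pyramid the chosen cops sit) so that invariants (1) and (2) of Theorem 1 hold at the start; once that is in place, the paper's bookkeeping with $D_{i,k}^\pm$ already gives termination and your extra potential $\Phi$ is not needed.
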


\begin{proof}
    Let $\mathcal{C}$ denote the set of cops and suppose for each $1\leq i \leq n$ there exist $k(i)^+,k(i)^-\in \mathbb{N}$ such that there is at least one cop in each of $V_{i,k(i)^+}^+$ and $V_{i,k(i)^-}^-$, say $C_i^+$ and $C_i^-$ where $C_i^+\neq C_j^\pm$ for $i\neq j$, $C_i^-\neq C_j^\pm$ for $i\neq j$, and $C_i^+\neq C_i^-$. 

    Assume that the robber's starting location is \( (0, 0,\dots , 0) \). Let $\mathcal{C}'=\{C_i^+,C_i^-:1\leq i \leq n \}$. We will define a strategy for $\mathcal{C}$ as follows: if $C\in \mathcal{C}\setminus \mathcal{C}'$, then $C$ will remain on his starting position on each turn. Suppose on turn $k$ the robber moves parallel to $X_i$ in the positive direction. Then $C_j^\pm$ will move parallel to $X_i$ in the positive direction for all $j\neq i$. If $C_i^+$ has not coordinate matched the robber, then there exists some $j\neq i$ such that the robber's $j$th coordinate and the cop's $j$th coordinate, $x_j$ and $c_{1,j}^+$, do not match. If $c_{1,j}^+>x_j$, then $C_i^+$ will move in the negative direction parallel to $X_j$ and if $c_{1,j}^+<x_j$, then $C_i^+$ will move in the positive direction parallel to $X_j$.  If $C_i^+$ has coordinate matched the robber, then he will move parallel to the $X_i$ axis in the direction of the robber. Define the strategy similarly for $C_i^-$. 
    
   At the end of turn $k$, let $C_i^{+}$'s coordinates be denoted $C_i^+(k)=(c_{i,1}^{+}(k), c_{i,2}^{+}(k),\dots,\\
    \hspace{2 ex}c_{i,n}^{+}(k))$ and similarly for $C_i^{-}$. Let $D_{i,k}^{\pm} = \sum_{j=1}^n |c_{i,j}^{\pm}(k) - x_j(k)|$, where, on turn $k$, the robber's coordinates are given by $R(k)=(x_1(k),x_2(k),\dots,x_n(k))$. Note that if $D_{i,k}^{\pm}=0$, then $C_i^\pm$ has caught the robber. On turn $k$, we assume the following induction assumption holds for each cop:

    \setlength{\abovedisplayskip}{0pt}
\setlength{\belowdisplayskip}{2pt}
\setlength{\abovedisplayshortskip}{2pt}
\setlength{\belowdisplayshortskip}{2pt}

    \begin{itemize} 
        \item $D_k^{i\pm} \leq D_{k-1}^{i\pm}$, \text{and this inequality is strict for at least one } $i\in \{1,\dots,n\}$
         \end{itemize}

\vspace{3mm}
    
  For the base case with $k=0$ our induction hypothesis is vacuously true. Now suppose the robber moves on some turn $k$. Without loss of generality due to symmetry, suppose the robber moves parallel to the $X_1$ -axis in the positive direction, i.e. $R(k+1)=(x_{i,1}^+(k)+1,x_{i,2}^+(k),\dots,x_{i,n}^+(k))$. According to our strategy, for all $j\neq 1$, $C_j^\pm(k)=(c_{i,1}^+(k)+1,c_{i,2}^+(k),\dots,c_{i,n}^+(k))$ and therefore

\setlength{\abovedisplayskip}{-5pt}
\setlength{\belowdisplayskip}{5pt}
\setlength{\abovedisplayshortskip}{0pt}
\setlength{\belowdisplayshortskip}{0pt}

    \begin{align*}     
      D_{j,k+1}^\pm &= \sum_{j=1}^n |c_{i,j}^{\pm}(k+1) - x_j(k+1)|\\
      &=|c_{i,1}(k+1)-x_1(k+1)|+\sum_{j\neq 1}^n |c_{i,j}^{\pm}(k+1) - x_j(k+1)|\\
      &=|c^\pm_{i,1}(k)+1-(x_1(k)+1)|+\sum_{j\neq 1}^n |c_{i,j}^{\pm}(k) - x_j(k)|\\
      &= \sum_{j=1}^n |c_{i,j}^{\pm}(k) - x_j(k)|=D_{j,k}^\pm. 
    \end{align*}

  Lastly, we will show that $D_{1,k+1}^+<D_{1,k}^+$ and $D_{1,k+1}^-=D_{1,k}^-$. For $C_1^-$ we will consider the following cases:

  \begin{case} Suppose there is some $j\neq 1$ such that $c_{1,j}^-(k)\neq x_j(k)$ for some $j\neq 1$, i.e. $C_1^-$ has not coordinate-matched the robber. Without loss of generality suppose $c_{1,j}^-(k)>x_j(k)$. Then $c_{1,j}^-(k+1)=c_{1,j}^-(k)-1$ and $c_{1,i}^-(k+1)=c_{1,i}^-(k)$ for all $i\neq j$. Therefore:

  \setlength{\abovedisplayskip}{-5pt}
\setlength{\belowdisplayskip}{5pt}
\setlength{\abovedisplayshortskip}{0pt}
\setlength{\belowdisplayshortskip}{0pt}

     \begin{align*}  
     D_{1,k+1}^-&= \sum_{m=1}^n |c_{1,m}^{-}(k+1) - x_m(k+1)|\\
        &=|c^-_{1,j}(k+1)-x_j(k+1)|+\sum_{m\neq j}^n |c_{1,m}^{-}(k+1) - x_m(k+1)|\\
        &=|c^-_{1,j}(k)-1-x_j(k)|+|c_{1,1}^-(k)-(x_1(k)+1)|+\sum_{m\neq 1,j}^n |c_{1,m}^{-}(k) - x_m(k)|\\
        &=\left[c^-_{1,j}(k)-1 - x_j(k)\right] +\left[x_1(k)+1-c_{1,1}^-(k)\right]+\sum_{m\neq 1,j}^n |c_{1,m}^{-}(k) - x_m(k)|\\
        &=D_{1,k}^-
\end{align*}
    \end{case}

\textbf{}
  \begin{case}
      Suppose $C_1^+$ has coordinate matched the robber. Then $x_j(k)=c_{1,j}^+(k)$ for all $j\neq 1$. If $x_1(k)=c_{1,1}^+(k)$ as well, then the robber has been caught. So let us suppose $x_1(k)< c_{1,1}^+(k)$. Then according to $\sigma$, $c_{1,1}^+(k+1)=c_{1,1}^-(k)-1$ and $c_{1,j}^+(k+1)=c_{1,j}^-(k)$ for all $j \neq 1$. A computation similar to the above case easily shows that $D_{1,k+1}^+<D_{1,k}^+$
  \end{case}

  All that remains to show is that $D_{1,k+1}^+<D_{1,k}^+$. For $C_1^+$ we will consider the following cases:

\setcounter{casecount}{0}
  \begin{case}
  Suppose there is some $j\neq 1$ such that $c_{1,j}^+(k)\neq x_j(k)$ for some $j\neq 1$, i.e. $C_1^+$ has not coordinate matched the robber. Without loss of generality suppose $c_{1,j}^+(k)>x_j(k)$. Then according to our strategy, $c_{1,j}^+(k+1)=c_{1,j}^+(k)-1$ and $c_{1,i}^+(k)=c_{1,i}^+(k)$ for all $i\neq j$. Therefore:
     \setlength{\abovedisplayskip}{5pt}
\setlength{\belowdisplayskip}{5pt}
\setlength{\abovedisplayshortskip}{0pt}
\setlength{\belowdisplayshortskip}{0pt}
   \begin{align*}
     D_{1,k+1}^+= &\sum_{m=1}^n |c_{1,m}^{+}(m) - x_m(k+1)|\\
     &=|c^+_{1,j}(k+1)-x_j(k+1)|+\sum_{m \neq j}^n |c_{1,m}^{+}(k+1) - x_m(k+1)|\\
     &=|c^+_{1,j}(k)-1-x_j(k)|+|c_{1,1}^+(k)-(x_1(k)+1)| +\sum_{m \neq 1,j}^n |c_{1,m}^{+}(k) - x_m(k)|\\
     &=c^+_{1,j}(k)-x_j(k)-1+c_{1,1}^+(k)-x_1(k) -1+\sum_{m \neq 1,j}^n |c_{1,m}^{+}(k) - x_m(k)|\\
     &=|c^+_{1,j}(k)-x_j(k)|-1+|c_{1,1}^+(k)x_1(k) |-1+\sum_{m \neq 1,j}^n |c_{1,m}^{+}(k) - x_m(k)|\\
     &=\sum_{m=1}^n |c_{1,m}^{+}(k) - x_m(k)|-2\\
     &<D_{1,k}^+   
\end{align*}

    \end{case}

  \begin{case}
      Suppose $C_1^-$ has coordinate matched the robber. Then $x_j(k)=c_{1,j}^-(k)$ for all $j\neq 1$. If $x_1(k)=c_{1,1}^-(k)$ as well, then the robber has been caught. So let us suppose $x_1(k)> c_{1,1}^-(k)$. Then according to our strategy, $c_{1,1}^-(k+1)=c_{1,1}^-(k)+1$. A computation similar to the one above easily shows that $D_{1,k+1}^-=D_{1,k}^-$
  \end{case}
  
Therefore the inductive hypothesis holds and the strategy must eventually terminate, as $D_{i,k}^\pm$ will eventually be 0 for at least one $i\in \{1,\dots,n\}$ and for some $k\in \mathbb{N}$.

\end{proof}

\section{Classification of Winning Sets in $\mathbb{Z}^n$}

We now have everything we need to provide a classification of sets of winning cops played on $\mathbb{Z}^n$.

\begin{theorem}\label{general result with Vs}
    A collection of cops $\mathcal{C}$ in $\mathbb{Z}^n$ can catch the robber if and only if for each $m\in \{1,...,n\}$ there exist infinitely many $a\in \mathbb{N}$ such that there is at least one cop in $V_{m,a}^+$ and infinitely $b\in \mathbb{N}$ such that there is at least one cop in $V_{m,b}^-$.
\end{theorem}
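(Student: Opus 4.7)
The plan is to establish the two implications separately: the sufficient direction by reducing to Lemma~\ref{Origin catch in n} via the translation invariance of $\mathbb{Z}^n$, and the necessary direction by constructing an explicit evasion strategy for the robber.

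For the sufficient direction, fix any starting position $r = (r_1, \dots, r_n)$ that the robber might choose. Shifting all positions by $-r$ produces an equivalent game in which the robber sits at the origin and each cop $p$ now sits at $p - r$. To invoke Lemma~\ref{Origin catch in n}, for each axis $m$ I must exhibit distinct cops whose shifted positions lie in some $V_{m,k(m)^+}^+$ and $V_{m,k(m)^-}^-$ with $k(m)^{\pm} \geq 1$. The key calculation is that if $p \in V_{m,a}^+$ in the original coordinates, then
\[
(p - r)_m - \max_{i \neq m} |(p - r)_i| \;\geq\; a - r_m - \max_{i \neq m} |r_i|,
\]
so taking $a \geq 1 + r_m + \max_{i \neq m} |r_i|$ guarantees that the shifted cop lies in $V_{m,k}^+$ for some $k \geq 1$; an analogous bound handles the negative pyramids. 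Since the hypothesis provides infinitely many valid $a$ and $b$, such a choice is always available. Distinctness of the $2n$ selected cops is automatic: a short computation shows that a single cop occupying two pyramids $V_{m,k}^{\pm}$ and $V_{m',k'}^{\pm}$ with distinct signed axes and offsets $\geq 1$ would force $k + k' \leq 0$, ruling this out.

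For the necessary direction, I prove the contrapositive. Suppose the condition fails; without loss of generality, only finitely many $a \in \mathbb{N}$ satisfy $V_{1,a}^+ \cap \mathcal{C} \neq \emptyset$, and let $A$ be an upper bound for such $a$. Then every cop $c = (c_1, \dots, c_n) \in \mathcal{C}$ satisfies $c_1 - \max_{i \neq 1} |c_i| \leq A$, for otherwise $c$ would lie in $V_{1,a'}^+$ with $a' > A$. The robber starts at $R_0 = (M, 0, \dots, 0)$ for any fixed $M > A$ and on turn $k$ moves to $R_k = (M + k, 0, \dots, 0)$. No cop can ever catch because
\[
\|R_k - c\|_1 \;=\; |M + k - c_1| + \sum_{i \neq 1} |c_i| \;>\; k
\]
for every $k \geq 0$ and every cop $c$. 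Splitting into cases $c_1 \leq M + k$ and $c_1 > M + k$: in the first, the inequality reduces to $c_1 < M + \sum_{i \neq 1} |c_i|$, which follows from $c_1 \leq A + \max_{i \neq 1} |c_i| \leq A + \sum_{i \neq 1} |c_i|$ combined with $A < M$; in the second, using $c_1 \geq M + k + 1$ together with the same upper bound on $c_1$ yields $c_1 + \sum_{i \neq 1} |c_i| > M + 2k$ after a brief calculation.

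The main obstacle I expect lies in the bookkeeping of the sufficient direction: the shifted pyramid inequality takes care to derive cleanly, and the automatic distinctness of the chosen cops hinges on an auxiliary uniqueness observation about pyramid memberships. The necessary direction is essentially an elementary $L^1$-distance computation, but framing it around the single inequality $c_1 - \max_{i \neq 1} |c_i| \leq A$ --- which directly encodes the failure of the pyramid condition --- keeps the case analysis concise.
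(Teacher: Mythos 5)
Your proof follows the same route as the paper's: sufficiency by locating, for each axis, cops in sufficiently distant pyramids so that they lie in $V_{m,k}^{\pm}$ with $k\geq 1$ relative to the robber's starting position and then invoking Lemma~\ref{Origin catch in n}, and necessity by having the robber flee along the axis whose pyramid condition fails and verifying an $L^1$-distance inequality. The differences are only in execution --- you replace the paper's four-case analysis of the robber's starting position with a single translation argument, and your distinctness check and distance computation are cleaner than, but equivalent to, the paper's.
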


\begin{proof}
    Suppose by way of contraposition that there is some $m\in \{1,\dots,n\}$ such that there are only finitely many $a\in\mathbb{N}$ with at least one cop in $V_{m,a}^+$ or in $V_{m,a}^-$. Without loss of generality due to symmetry, suppose that $m=1$ and that there are only finitely many positive integers  $a_1,\dots,a_k$ with at least one cop in $V_{m,a_l}^+$ for $1\leq l\leq k$. Let $a'=\max\{a_1,\dots,a_k\}$. By assumption, there are no cops in $V_{1,b}^+$ for any $b>a'$. Let the robber's starting position be $R(0)=(a'+2,0,\dots,0)$ and define the robber's strategy $\sigma$ to be to move along the positive direction of the $X_1$-axis each turn, i.e. at the end of the robber's $k$th turn, his position will be given by $R(k)=(a'+2+k,0,\dots,0)$.\\

    Suppose by way of contradiction that the robber is caught while following $\sigma$. Then there exists some $k\in \mathbb{N}$ such that at the end of the cops' turn, the robber is caught on $(a'+2+k,0,\dots,0)$. First, note that $k\geq 2$, since for any $C\in \mathcal{C}$, $|C(0)-R(0)|\geq 2$. Next, note that any cop whose $x_1$-coordinate is less than $a'+4$ cannot catch the robber, since for any $C\in \mathcal{C}$, $C\notin V_{1,b}^+$ for any $b>a'$. Thus the only cops who could potentially catch the robber are cops whose starting coordinates, say $(x_1,x_2,\dots,x_n)$ satisfy:
    \begin{center}
        $x_1\geq a'+4$ and $|x_j|\geq 4$ for some $i\in \{2,\dots,n\}$.
    \end{center}

    Suppose that the robber is caught on $(a'+2+k,0,\dots,0)$ by $C\in \mathcal{C}$, whose starting position is given by $C(0)=(x_1,x_2,\dots,x_n)$. Then we must have
    \begin{center}
        $|x_1-(a'+2+k)|+\displaystyle{\Sigma_{j=2}^n|x_j-0|}\leq k$\\
    \end{center}

    Since $x_1\geq a'+4$, we have

        \begin{align*}            
        &|a'+4-(a'+2+k)|+\displaystyle{\Sigma_{j=2}^n|x_j-0|}\leq k\\       
        \implies &|2-k|+\displaystyle{\Sigma_{j=2}^n|x_j|}\leq k\\     
        \implies &k-2+\displaystyle{\Sigma_{j=2}^n|x_j-0|}\leq k \text{ since } k\leq 2\\    
        \implies &\displaystyle{\Sigma_{j=2}^n|x_j-0|}\leq 2\\  
        \implies & 4\leq |x_j| \leq \displaystyle{\Sigma_{j=2}^n|x_j-0|}\leq 2\\
    \end{align*}
    \vspace{3mm}
    A contradiction, thus following the strategy $\sigma$ the robber cannot be caught. Therefore if a collection of cops catch the robber then for each $m\in \{1,...,n\}$ there exist infinitely many $a\in \mathbb{N}$ such that there is at least one cop in $V_{{m,a}}^+$ and infinitely $b\in \mathbb{N}$ such that there is at least one cop in $V_{{m,b}}^-$.\\

Next, suppose for each $m\in \{1,...,n\}$ there exist infinitely many $a\in \mathbb{N}$ such that there is at least one cop in $V_{{m,a}}^+$ and infinitely $b\in \mathbb{N}$ such that there is at least one cop in $V_{{m,b}}^-$. Let the robber's starting position be $R(0)=(x_1,x_2,\dots,x_n)$. Since the set $\{V_{i,m}^+,V_{i,m}^-\}_{i=1,m=0}^{n,\infty}$ satisfies $\displaystyle{\cup_{m=0}^\infty \cup_{i=1}^n V_{i,m}^\pm}=\mathbb{Z}^n$, there exists some $i\in \{1,\dots,n\}$ and some $m\in \mathbb{N}$ such that $R(0)\in V_{i,m}^+$ or $R(0)\in V_{i,m}^-$. Without loss of generality due to symmetry, suppose $i=1$, $R(0)\in V_{1,m}^+$, and $x_j\geq 0$ for each $1\leq j\leq n$. For $1\leq i\leq n$ define $V_{i,R}^+$ to be $\{(y_1+x_1,y_2+x_2,\dots,y_x+x_n):(y_1,y_2,\dots,y_n)\in V_{i,0}^+\}$, in other words $V_{i,R}^+$ is $V_{i,0}^+$ shifted so that the robber's starting position is viewed as the origin. Define $V_{i,R}^-$ to be $\{(y_1+x_1,y_2+x_2,\dots,y_x+x_n):(y_1,y_2,\dots,y_n\in V_{i,0}^-\}$. We consider the following cases for the robber's starting position:\\

\begin{case}
    Suppose $R(0)=(0,0,\dots,0)$. By assumption, for each $1\leq i\leq n$ there exists minimal values $ k(i)^+,k(i)^-\in \mathbb{N}_{\geq 1}$ such that there is at least one cop in $V_{i,k(i)^+}^+$ and at least one cop in $V_{i,k(i)^-}^-$. Let $C_i^+$ be any cop in $V_{i,k(i)^+}^+$ and $C_i^-$ be any cop in $V_{i,k(i)^-}^-$. By Lemma \ref{Origin catch in n}, $\mathcal{C}=\{C_i^+,C_i^-:1\leq i \leq n\}$ can catch the robber. \\
\end{case}

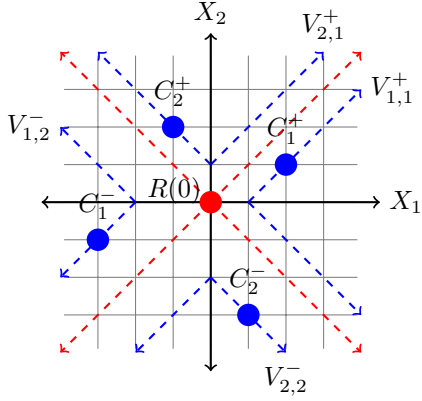
\begin{figure}
\begin{center}
\begin{tikzpicture}[scale=0.5]

\draw[step=1cm,gray,thin] (-3.9,-3.9) grid (3.9,3.9);

\draw[thick,->] (0,0) -- (4.5,0);
\draw[thick,->] (0,0) -- (0,4.5);
\draw[thick,<-] (-4.5,0) -- (0,0);
\draw[thick,<-] (0,-4.5) -- (0,0);

\draw[red,thick,dashed,->] (0,0) -- (4,4);
\draw[red,thick,dashed,->] (0,0) -- (-4,4);

\draw[red,thick,dashed,->] (0,0) -- (-4,-4);
\draw[red,thick,dashed,->] (0,0) -- (4,-4);

\draw[blue,thick,dashed,->] (0,1) -- (-3,4);
\draw[blue,thick,dashed,->] (0,1) -- (3,4);
\draw(3,4) node[anchor=south] {$V_{2,1}^+$};

\draw[blue,thick,dashed,->] (0,-2) -- (-2,-4);
\draw[blue,thick,dashed,->] (0,-2) -- (2,-4);
\draw(2,-4) node[anchor=north] {$V_{2,2}^-$};

\draw[blue,thick,dashed,->] (-2,0) -- (-4,-2);
\draw[blue,thick,dashed,->] (-2,0) -- (-4,2);
\draw(-4,2) node[anchor=east] {$V_{1,2}^-$};

\draw[blue,thick,dashed,->] (1,0) -- (4,3);
\draw[blue,thick,dashed,->] (1,0) -- (4,-3);
\draw(4,3) node[anchor=west] {$V_{1,1}^+$};

\node[circle,fill=red, inner sep=3pt,label={[anchor=east]$R(0)$}] at (0,0){};

\node[circle,fill=blue, inner sep=3pt,label=$C_1^+$] at (2,1){};
\node[circle,fill=blue, inner sep=3pt,label=$C_1^-$] at (-3,-1){};
\node[circle,fill=blue, inner sep=3pt,label=$C_2^+$] at (-1,2){};
\node[circle,fill=blue, inner sep=3pt,label=$C_2^-$] at (1,-3){};

\draw(4.5,0) node[anchor=west] {$X_{1}$};
\draw(0,4.5) node[anchor=south] {$X_{2}$};

\end{tikzpicture}
\caption{An example of Case 1 in $\mathbb{Z}^2$.}
\end{center}
\end{figure}

\begin{case}
   Suppose $R_0=(a,a,\dots,a)$ for some $a\in \mathbb{N}_{n\geq 1}$. Let $\hat{k}(1)^+$ be such that $V_{1,R}^+\cap \{(x_1,0,\dots,0):x_1\in \mathbb{Z}^+\}=\{(\hat{k}_1^+,0,\dots,0)\}$. Note that $\hat{k}(1)^+>0$. By assumption, there exists a least $k(1)^+\in \mathbb{N}$ such that $k(1)^+\geq \hat{k}(1)^+$ and there is at least one cop in $V_{1,k(1)^+}^+$. Let $C_1^+$ be any cop in $V_{1,k(1)^+}^+$. Let $k(1)-\in \mathbb{N}$ ne least such that there is at least one cop in $V_{1,k(1)^-}^-$. Let $C_1^-$ be any cop in $V_{1,k(1)^-}^-$. For each $2\leq i \leq n$, define $\hat{k}(i)^+$ to be such that $V_{i,R}^+\cap X_i=\{(x_1,x_2,\dots,x_n):x_i=\hat{k}(i)^+,x_j=0 \text{ for all }j\neq i\}$. For each $2\leq i \leq n$ let $k(i)^+$ be the least integer such that $k(i)^+\geq \hat{k}(i)^+$ and there is at least one cop in $V_{i.k(i)^+}^+$. Let $C_i^+$ be any cop in $V_{i.k(i)^+}^+$. For each $2\leq i \leq n$, let $k(i)^-\in \mathbb{N}$ be least such that there is at least one cop in $V_{i,k(i)^-}^-$. Let $C_i^-$ be any cop in $V_{i,k(i)^-}^-$. By Lemma \ref{Origin catch in n}, $\mathcal{C}=\{C_i^+,C_i^-:1\leq i \leq n\}$ can catch the robber. \\
\end{case}

\begin{figure}
\begin{center}
\begin{tikzpicture}[scale=0.5]

\draw[step=1cm,gray,thin] (-3.9,-3.9) grid (3.9,3.9);

\draw[thick,->] (0,0) -- (4.5,0);
\draw[thick,->] (0,0) -- (0,4.5);
\draw[thick,<-] (-4.5,0) -- (0,0);
\draw[thick,<-] (0,-4.5) -- (0,0);

\draw[blue,thick,dashed,->] (0,2) -- (-2,4);
\draw[blue,thick,dashed,->] (0,2) -- (2,4);
\draw(2,4) node[anchor=south] {$V_{2,2}^+$};

\draw[blue,thick,dashed,->] (0,-2) -- (-2,-4);
\draw[blue,thick,dashed,->] (0,-2) -- (2,-4);
\draw(2,-4) node[anchor=north] {$V_{2,2}^-$};

\draw[blue,thick,dashed,->] (-2,0) -- (-4,-2);
\draw[blue,thick,dashed,->] (-2,0) -- (-4,2);
\draw(-4,2) node[anchor=east] {$V_{1,2}^-$};

\draw[blue,thick,dashed,->] (3,0) -- (4,1);
\draw[blue,thick,dashed,->] (3,0) -- (4,-1);
\draw(4,1) node[anchor=south] {$V_{1,3}^+$};

\draw[red,thick,dashed,->] (1,1) -- (4,4);
\draw[red,thick,dashed,->] (1,1) -- (-2,4);

\draw[red,thick,dashed,->] (1,1) -- (4,-2);
\draw[red,thick,dashed,->] (1,1) -- (-4,-4);

\node[circle,fill=red, inner sep=3pt,label=$R(0)$] at (1,1){};

\node[circle,fill=blue, inner sep=3pt,label=$C_1^+$] at (3,0){};
\node[circle,fill=blue, inner sep=3pt,label=$C_1^-$] at (-3,-1){};
\node[circle,fill=blue, inner sep=3pt,label=$C_2^+$] at (-1,3){};
\node[circle,fill=blue, inner sep=3pt,label=$C_2^-$] at (-1,-3){};

\draw(4.5,0) node[anchor=west] {$X_{1}$};
\draw(0,4.5) node[anchor=south] {$X_{2}$};

\end{tikzpicture}
\caption{An example of Case 2 in $\mathbb{Z}^2$.}
\end{center}
\end{figure}
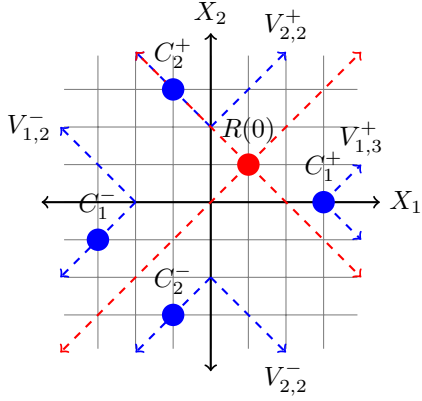

\begin{case} 
    Suppose $R(0)=(x_1,0,\dots,0)$ for some $x_1\in \mathbb{N}_{\geq 1}$. Let $k(1)^+\in \mathbb{N}$ be least such that $k(1)^+\geq x_1$ and there is at least one cop in $V_{1,k(1)^+}^+$. Let $C_1^+$ be any cop in $V_{1,k(1)^+}^+$. Let $K(1)^-\in \mathbb{N}$ be least such that there is at least one cop in $V_{1,K(1)^-}^-$. Let $C_1^-$ be any cop in $V_{1,K(1)^-}^-$. For each $2\leq i\leq n$ define $\hat{k}(i)^+$ to be such that $V_{i,R}^+\cap X_i=\{(x_1,x_2,\dots,x_n):x_i=\hat{k}(i)^+,x_j=0 \text{ for all }j\neq i\}$. For each $2\leq i \leq n$ let $k(i)^+$ be the least integer such that $k(i)^+\geq \hat{k}(i)^+$ and there is at least one cop in $V_{i.k(i)^+}^+$. Let $C_i^+$ be any cop in $V_{i.k(i)^+}^+$ and pick $C_i^-$ similarly.  By Lemma \ref{Origin catch in n}, $\mathcal{C}=\{C_i^+,C_i^-:1\leq i \leq n\}$ can catch the robber. \\
\end{case}

\begin{case}
    Suppose $R(0)=(x_1,x_2,\dots,x_n)$ where $x_i\neq x_j$ for $1\leq i<j\leq n$ and $x_k\neq 0$ for some $1\leq k \leq n$. For each $1\leq i\leq n$ define $\hat{k}(i)^+$ to be such that $V_{i,R}^+\cap X_i=\{(x_1,x_2,\dots,x_n):x_i=\hat{k}(i)^+,x_j=0 \text{ for all }j\neq i\}$. For each $1\leq i \leq n$ let $k(i)^+$ be the least integer such that $k(i)^+\geq \hat{k}(i)^+$ and there is at least one cop in $V_{i.k(i)^+}^+$. Let $C_i^+$ be any cop in $V_{i.k(i)^+}^+$ and pick $C_i^-$ similarly.  By Lemma \ref{Origin catch in n}, $\mathcal{C}=\{C_i^+,C_i^-:1\leq i \leq n\}$ can catch the robber. \\
\end{case}

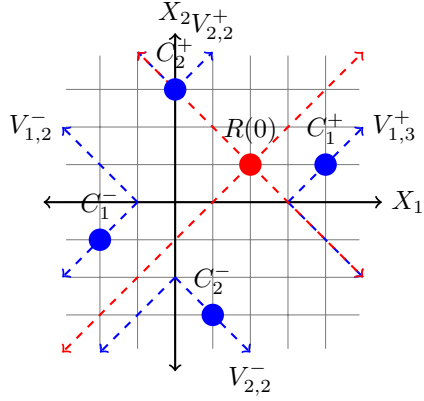
\begin{figure}
\begin{center}
\begin{tikzpicture}[scale=0.5]

\draw[step=1cm,gray,thin] (-3.9,-3.9) grid (3.9,3.9);

\draw[thick,->] (0,0) -- (4.5,0);
\draw[thick,->] (-1,0) -- (-1,4.5);
\draw[thick,<-] (-4.5,0) -- (0,0);
\draw[thick,<-] (-1,-4.5) -- (-1,0);

\draw[blue,thick,dashed,->] (-1,3) -- (-2,4);
\draw[blue,thick,dashed,->] (-1,3) -- (0,4);
\draw(0,4) node[anchor=south] {$V_{2,2}^+$};

\draw[blue,thick,dashed,->] (-1,-2) -- (1,-4);
\draw[blue,thick,dashed,->] (-1,-2) -- (-3,-4);
\draw(1,-4) node[anchor=north] {$V_{2,2}^-$};

\draw[blue,thick,dashed,->] (-2,0) -- (-4,-2);
\draw[blue,thick,dashed,->] (-2,0) -- (-4,2);
\draw(-4,2) node[anchor=east] {$V_{1,2}^-$};

\draw[blue,thick,dashed,->] (2,0) -- (4,2);
\draw[blue,thick,dashed,->] (2,0) -- (4,-2);
\draw(4,2) node[anchor=west] {$V_{1,3}^+$};

\draw[red,thick,dashed,->] (1,1) -- (4,4);
\draw[red,thick,dashed,->] (1,1) -- (-2,4);

\draw[red,thick,dashed,->] (1,1) -- (4,-2);
\draw[red,thick,dashed,->] (1,1) -- (-4,-4);

\node[circle,fill=red, inner sep=3pt,label=$R(0)$] at (1,1){};

\node[circle,fill=blue, inner sep=3pt,label=$C_1^+$] at (3,1){};
\node[circle,fill=blue, inner sep=3pt,label=$C_1^-$] at (-3,-1){};
\node[circle,fill=blue, inner sep=3pt,label=$C_2^+$] at (-1,3){};
\node[circle,fill=blue, inner sep=3pt,label=$C_2^-$] at (0,-3){};

\draw(4.5,0) node[anchor=west] {$X_{1}$};
\draw(-1,4.5) node[anchor=south] {$X_{2}$};

\end{tikzpicture}
\caption{An example of Case 4 in $\mathbb{Z}^2$.}
\end{center}
\end{figure}

Therefore given any starting position, the robber will always be caught.
\end{proof}

\begin{corollary}
    Given any starting collection of robbers in $\mathbb{Z}^n$, if for each $m\in \{1,...,n\}$ there exist infinitely many $a\in \mathbb{N}$ such that there is at least one cop in $V_{m,a}^+$ and infinitely $b\in \mathbb{N}$ such that there is at least one cop in $V_{m,b}^-$ then the collection of cops $\mathcal{C}$ can catch the robbers.
\end{corollary}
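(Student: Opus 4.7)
The plan is to bootstrap Theorem \ref{general result with Vs} to the multi-robber setting by partitioning $\mathcal{C}$ into disjoint subcollections, each of which independently satisfies the hypothesis of the theorem, and then dedicating one subcollection to each robber. Since the strategy in Theorem \ref{general result with Vs} uses only finitely many moving cops and responds to just one robber's movements, these strategies can be run in parallel without conflict.

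First I would index the robbers as $\{R_j\}_{j \in J}$ with $J \subseteq \mathbb{N}$. For each direction $(m, \epsilon) \in \{1, \dots, n\} \times \{+, -\}$, the hypothesis supplies an infinite sequence of positive integers $a_1^{m,\epsilon} < a_2^{m,\epsilon} < \dots$ such that $V_{m, a_k^{m,\epsilon}}^\epsilon$ contains at least one cop; pick one such cop for each $k$. Next, using a bijection between this indexed pool and $J \times \mathbb{N}$, distribute these chosen cops among subcollections $\{\mathcal{C}_j\}_{j \in J}$ so that each $\mathcal{C}_j$ receives infinitely many cops in each pyramid direction. Any unselected cops in $\mathcal{C}$ can be assigned arbitrarily. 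By construction, each $\mathcal{C}_j$ satisfies the hypothesis of Theorem \ref{general result with Vs}.

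Then I would assign each subcollection $\mathcal{C}_j$ the task of capturing $R_j$ using the strategy from the proof of Theorem \ref{general result with Vs}. Because the $\mathcal{C}_j$ are pairwise disjoint, every cop receives exactly one instruction per turn based on the movements of its assigned robber; all other robbers are simply ignored by that cop. By Theorem \ref{general result with Vs}, robber $R_j$ is captured by $\mathcal{C}_j$ in finitely many turns, and since this holds for every $j \in J$, the whole collection of robbers is eventually caught.

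The main obstacle is the partition step: we must verify that an infinite family of cops with infinitely many representatives in each pyramid direction can be split into countably many subcollections, each retaining that property. This is a routine diagonalization on the indexed pool, but it is the only point where the full infinitude of the hypothesis (as opposed to mere nonemptiness) is essential; once the partition is in place, the remainder of the argument is a direct parallel application of the single-robber theorem.
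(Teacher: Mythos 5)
Your proposal is correct and follows essentially the same route as the paper: assign to each robber a dedicated, pairwise-disjoint team of cops drawn from the infinite supply in each pyramid direction, and run the single-robber strategy of Theorem \ref{general result with Vs} in parallel. The only cosmetic difference is that you partition the pool up front so each team retains infinitely many cops per direction, whereas the paper greedily selects just the $2n$ cops needed for each robber in turn; both work for the same reason.
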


\begin{proof}
    Since any collection of robbers in $\mathbb{Z}^n$ must be countable, we can enumerate the set of robbers: $\mathcal{R}=\{R_1,R_2,\dots\}$. Now consider the first robber in $\mathcal{R}$, $R_1$. Following the same steps as outlined in Theorem \ref{general result with Vs}, regardless of the robber's starting position there exists a distinct cop in each of $V_{i,R_1}^+$ and $V_{i,R_1}^-$ for each $1\leq i\leq n$. Let us denote this collection of $2n$ distinct cops as $\mathcal{C}_1$. Then by Theorem \ref{general result with Vs}, the cops in $\mathcal{C}_1$ can catch $R_1$ in finitely many steps.. Now consider the second robber $R_2$. By the same argument as in Theorem \ref{general result with Vs}, there exists $2n$ distinct cops, one in each of $V_{i,R_2}^\pm$ for $1\leq i\leq n$. In particular, we may pick this collection of cops to be entirely disjoint from $\mathcal{C}_1$, as by assumption there are infinitely many cops placed appropriately throughout $\mathbb{Z}^n$. Thus there is a set of cops $\mathcal{C}_2$ entirely disjoint from $\mathcal{C}_1$ such that $\mathcal{C}_2$ can catch $R_2$ by Theorem \ref{general result with Vs}. Similarly, for each robber $R_k\in \mathcal{R}$, we can find a collection of cops $\mathcal{C}_k$ that is disjoint from $\mathcal{C}_i$ for all other positive integers $i\neq k$ such that the cops in $\mathcal{C}_k$ can capture $R_k$. 
\end{proof}

This concludes our classification of the infinite variation of Cops and Robbers in $\mathbb{Z}^n$. A natural continuation of this research would be to investigate different variations of the Cops and Robbers game in $\mathbb{Z}^n$. For example, one interesting variation of Cops and Robbers played in $\mathbb{Z}^n$ involves the robber defining a fixed radius about his starting position, inside which no cops are allowed to be placed. The game begins with the robber picking his starting coordinate in $\mathbb{Z}^n$ and a non-negative integer, $R(0)=(x_1,x_2,...,x_n)$ and $r$ respectively. The cops are then free to place themselves on any coordinate except inside $R=\{\overline{y}\in \mathbb{Z}^n: |\overline{y}-R(0)|\leq r\}$. This variation of the game forms a basis for research currently being pursued by our team.

\newpage
\section{Funding and Declarations}
The authors declare that no funds, grants, or other support were received during the preparation of this manuscript. All authors certify that they have no affiliations with or involvement in any organization or entity with any financial interest or non-financial interest in the subject matter or materials discussed in this manuscript.

\newpage


\begin{thebibliography}{9}
\bibitem{Aigner84}
M. Aigner and M. Fromme (1984) \emph{A game of cops and robbers}, Discrete Applied Mathematics, Volume 8, Issue 1, Pages 1-12. 
\bibitem{Bonato11}
 Anthony Bonato and Richard J. Nowakowski (2011) \emph{The Game of Cops and Robbers on Graphs}, American Mathematical Society.
\bibitem{Quillot78}
Alain Quillot (1978) \emph{Jeux et pointes fixes sur les graphes}, Ph. D Dissertation, Universit\'e de Paris VI. 
\end{thebibliography}
\end{document}